\documentclass[12pt]{article} 

% Custom definitions
% To use this customization file, insert the line "\input{custom}" in the header of the tex file.

% Formatting

\tolerance=1000
\usepackage[margin=1in]{geometry}

% Packages

% \usepackage{amssymb,latexsym}
\usepackage{amssymb,amsfonts,amsmath,latexsym,amsthm}
\usepackage[usenames,dvipsnames]{color}
\usepackage[]{graphicx}
\usepackage[space]{grffile}
\usepackage{mathrsfs}   % fancy math font
\usepackage[skip=0pt]{caption}
\usepackage{subcaption}
\usepackage{verbatim}
\usepackage{url}
\usepackage{bm}
\usepackage{dsfont}
\usepackage{extarrows}
\usepackage{multirow}
\usepackage{tikz}
\usetikzlibrary{fit}					% fitting shapes to coordinates
%\usetikzlibrary{backgrounds}	% drawing the background after the foreground

% \usepackage[colorlinks,citecolor=black,linkcolor=black,urlcolor=black]{hyperref}
% \usepackage{hyperref}
\usepackage[breaklinks=true]{hyperref}
\usepackage{breakcites}

\usepackage[authoryear,round]{natbib}

%  Theorems, etc.

\theoremstyle{plain}
\newtheorem{theorem}{Theorem}[section]
\newtheorem*{theorem*}{Theorem} % unnumbered theorem

\newtheorem{lemma}[theorem]{Lemma}

\theoremstyle{definition}

\theoremstyle{remark}

%\numberwithin{equation}{section}

\allowdisplaybreaks 

% Document-specific shortcuts
\DeclareMathOperator*{\DP}{DP}
\renewcommand{\v}{\bm v}
\newcommand{\z}{\bm z}
\newcommand{\x}{\bm x}
\newcommand{\y}{\bm y}
\newcommand{\q}{\bm q}
\newcommand{\C}{\bm C}

% Math shortcuts

% Probability distributions

\DeclareMathOperator*{\Beta}{Beta}

% Math operators

% Math characters

\newcommand{\N}{\mathbb{N}}

\renewcommand{\Pr}{\mathbb{P}}
\newcommand{\I}{\mathds{1}}

%\newcommand{\C}{\mathcal{C}}

%\newcommand{\N}{\mathcal{N}}

% Miscellaneous commands
\newcommand{\iid}{\stackrel{\mathrm{iid}}{\sim}}

% approximately proportional to
\def\app#1#2{%
  \mathrel{%
    \setbox0=\hbox{$#1\sim$}%
    \setbox2=\hbox{%
      \rlap{\hbox{$#1\propto$}}%
      \lower1.3\ht0\box0%
    }%
    \raise0.25\ht2\box2%
  }%
}

\title{An elementary derivation of the Chinese restaurant process from Sethuraman's stick-breaking process}
\author{Jeffrey W. Miller\\
Harvard University, Department of Biostatistics}
% \thanks{The author gratefully acknowledges support from the National Science Foundation (NSF) grant DMS-1045153 and the National Institutes of Health (NIH) grant 5R01ES017436.}
%\date{}

% Keywords: Bayesian, Clustering, Dirichlet process, Nonparametric, Partitions

\begin{document}
\maketitle
\begin{abstract}
The Chinese restaurant process (CRP) and the stick-breaking process are the two most commonly used representations of the Dirichlet process. However, the usual proof of the connection between them is indirect, relying on abstract properties of the Dirichlet process that are difficult for nonexperts to verify. This short note provides a direct proof that the stick-breaking process leads to the CRP, without using any measure theory.
We also discuss how the stick-breaking representation arises naturally from the CRP.
\end{abstract}

\section{Introduction}
\citet{Sethuraman_1994} showed that the Dirichlet process has the following stick-breaking representation: if
$\v_1,\v_2,\ldots\iid\Beta(1,\alpha)$, $\bm\pi_k = \v_k\prod_{i = 1}^{k-1} (1 -\v_i)$ for $k = 1,2,\ldots$, and $\bm\theta_1,\bm\theta_2,\ldots\iid H$, then the random discrete measure
\begin{align}\label{equation:stick}
\bm P = \sum_{k = 1}^\infty \bm\pi_k \delta_{\bm\theta_k}
\end{align}
is distributed according to the Dirichlet process $\DP(\alpha,H)$ with concentration parameter $\alpha$ and base distribution $H$. %, denoted $\DP(\alpha,H)$.
This representation has been instrumental in the development of many nonparametric models \citep{MacEachern_1999,MacEachern_2000,Hjort_2000,Ishwaran_2000,Ishwaran_2001b,Griffin_2006,Dunson_2008,Chung_2009,Rodriguez_2011,Broderick_2012}, has facilitated the understanding of these models \citep{Favaro_2012,Teh_2007,Thibaux_2007,Paisley_2010}, and is central to various inference algorithms \citep{Ishwaran_2001b,Blei_2006,Papaspiliopoulos_2008,Walker_2007,Kalli_2011}.

It is well-known that, as shown by \citet{Antoniak_1974}, the Dirichlet process induces a distribution on partitions as follows:
if $\bm P\sim\DP(\alpha,H)$ where $H$ is nonatomic (i.e., $H(\{\theta\})=0$ for any $\theta$), $\x_1,\ldots,\x_n|\bm P\,\iid\, \bm P$, and $\C$ is the partition of $\{1,\ldots,n\}$ induced by
$\x_1,\ldots,\x_n$, then
\begin{align}\label{equation:CRP}
\Pr(\C=C) = \frac{\alpha^{|C|}\Gamma(\alpha)}{\Gamma(\alpha + n)} \prod_{c\in C} \Gamma(|c|).
\end{align}
The sequential sampling process corresponding to this partition distribution is known as the Chinese restaurant process (CRP), or Blackwell--MacQueen urn process. 

The following key fact is a direct consequence of these two results (Sethuraman's and Antoniak's): if $\bm\pi=(\bm\pi_1,\bm\pi_2,\ldots)$ is defined as above, $\z_1,\ldots,\z_n|\bm\pi\,\iid\, \bm\pi$, and $\C$ is the partition induced by $\z_1,\ldots,\z_n$, then the distribution of $\C$ is given by Equation \ref{equation:CRP}.
This can be seen by noting that when $H$ is nonatomic, the distribution of $\C$ is the same as when it is induced by $\x_1,\ldots,\x_n|\bm P$.  % todo: check this

While this key fact follows directly from the results of Sethuraman and Antoniak, the proofs of their results are rather abstract
and are not easy to verify, especially for those without expertise in measure theory. 
% Sethuraman's clever and elegant proof
% Antoniak's proof
% Further, these proofs depend on the choice of base measure $H$ (must be continuous) and the properties of the space on which it is defined.
The purpose of this note is to provide a proof of this connection between the CRP and the stick-breaking representation
using only elementary, non-measure-theoretic arguments.
Our proof is completely self-contained and does not rely on any properties of the Dirichlet process or other theoretical results.
Conversely, we also provide a sketch of how the CRP naturally leads to the stick-breaking representation.

In previous work, \citet{broderick2013cluster} used De Finetti's theorem to provide an elegant derivation of the stick-breaking weights from the CRP.  Also, \citet{Paisley_2010b} showed by elementary calculations that if the base distribution $H$ is a discrete distribution on $\{1,\ldots,K\}$, and $\bm P$ is defined by the stick-breaking process as in Equation~\ref{equation:stick}, then $(\bm P(1),\ldots,\bm P(K)) \sim \mathrm{Dirichlet}(\alpha H(1),\ldots,\alpha H(K))$; thus, despite the similar sounding title of the article by \citet{Paisley_2010b}, the result shown there is altogether different from what we show here.

\section{Main result}
\label{section:main}

We use $[n]$ to denote the set $\{1,\ldots,n\}$, and $\N$ to denote $\{1,2,3,\ldots\}$.
As is standard, we represent a partition of $[n]$ as a set $C=\{c_1,\ldots,c_t\}$ of nonempty disjoint sets $c_1,\ldots,c_t$
such that $\bigcup_{i=1}^t c_i = [n]$. Thus, $t=|C|$ is the number of parts in the partition, and $|c|$ is the number of elements in a given
part $c\in C$.
We say that $C$ is the partition of $[n]$ induced by $z_1,\ldots,z_n$ if it has the property that
for any $i,j\in[n]$, $i$ and $j$ belong to the same part $c\in C$ if and only if $z_i = z_j$.
We use bold font to denote random variables.

\begin{theorem}
    \label{theorem:main}
    Suppose
    \begin{align*}
        & \v_1,\v_2,\ldots\iid\Beta(1,\alpha) \\
        & \bm\pi_k = \v_k\prod_{i = 1}^{k-1} (1 -\v_i) \text{ for } k = 1,2,\ldots, \\
        & \z_1,\ldots,\z_n|\bm\pi=\pi \,\iid\, \pi, \text{ that is, } \Pr(\z_i = k\mid \pi) = \pi_k,
    \end{align*}
    and $\C$ is the partition of $[n]$ induced by $\z_1,\ldots,\z_n$. Then
    $$ \Pr(\C=C) = \frac{\alpha^{|C|}\Gamma(\alpha)}{\Gamma(n + \alpha)} \prod_{c\in C} \Gamma(|c|). $$
\end{theorem}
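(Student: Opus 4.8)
The plan is to condition on the weights $\bm\pi$, reduce $\Pr(\C=C)$ to an expectation over the independent $\Beta(1,\alpha)$ variables, and evaluate that expectation by a direct (if bookkeeping-heavy) computation, finishing with a short combinatorial identity proved by induction. Write $C=\{c_1,\ldots,c_t\}$ with $t=|C|$ and $m_s=|c_s|$, so $\sum_s m_s=n$. The partition induced by $\z$ equals $C$ precisely when, for each part $c_s$, all the $\z_i$ with $i\in c_s$ share a common value, and these $t$ common values are distinct. Hence, conditionally on $\bm\pi=\pi$ (and using that $\sum_k\bm\pi_k=1$ almost surely, which I would verify via $\E\sum_k\bm\pi_k=1$),
$$ \Pr(\C=C\mid\pi)=\sum_{(k_1,\ldots,k_t)}\prod_{s=1}^t \pi_{k_s}^{m_s}, $$
where the sum runs over ordered $t$-tuples of \emph{distinct} positive integers, with part $c_s$ assigned label $k_s$. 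Taking expectations and interchanging sum and expectation (valid since all terms are nonnegative), the problem reduces to evaluating $\E\big[\prod_{s=1}^t\bm\pi_{k_s}^{m_s}\big]$ for fixed distinct labels and summing over all such labelings.

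For the core computation I would relabel the chosen indices in increasing order as $j_1<\cdots<j_t$, carrying a part size $a_\ell$ to each $j_\ell$; as the labeling ranges over all injections, $(a_1,\ldots,a_t)$ ranges over all orderings of $(m_1,\ldots,m_t)$ and the gaps between consecutive $j_\ell$ range freely. Expanding $\bm\pi_{j_\ell}=\v_{j_\ell}\prod_{i<j_\ell}(1-\v_i)$ and collecting powers, each beta variable appears to an explicit power, so by independence the expectation factors into terms of the two forms $h(a,b)=\E[\v^a(1-\v)^b]$ and $g(b)=\E[(1-\v)^b]=\alpha/(\alpha+b)$, both available in closed form. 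Summing over the $t$ gaps is then a product of geometric series, each evaluating to $1/(1-g(B_p))=(\alpha+B_p)/B_p$, where $B_p=\sum_{\ell>p}a_\ell$ is a tail sum.

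After substituting the closed forms, the Gamma factors telescope (using $a_\ell+B_\ell=B_{\ell-1}$ together with $B_0=n$ and $B_t=0$), leaving
$$ \Pr(\C=C)=\frac{\alpha^{t}\,\Gamma(\alpha)}{\Gamma(n+\alpha)}\Big(\prod_{s=1}^t m_s!\Big)\sum_{(w_1,\ldots,w_t)}\frac{1}{S_1 S_2\cdots S_t}, $$
where the sum is over all orderings $(w_1,\ldots,w_t)$ of $(m_1,\ldots,m_t)$ and $S_\ell=w_\ell+\cdots+w_t$. It then remains only to prove the identity $\sum_{(w_1,\ldots,w_t)}(S_1\cdots S_t)^{-1}=(m_1\cdots m_t)^{-1}$, which combined with $m_s!/m_s=\Gamma(m_s)$ yields the claimed formula. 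I would prove this identity by induction on $t$: conditioning on the first element $w_1=m_i$ fixes $S_1=n$ and reduces the tail to the $(t-1)$-element identity, giving $\frac{1}{n}\sum_i\big(\prod_{j\ne i}m_j\big)^{-1}=\frac{1}{n}\cdot\frac{\sum_i m_i}{\prod_j m_j}=(m_1\cdots m_t)^{-1}$.

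The main obstacle is the middle step: correctly accounting for the power to which each $\v_i$ is raised across all the factors $\prod_{i<j_\ell}(1-\v_i)$ — in particular recognizing that only finitely many beta variables appear (none with index exceeding $j_t$), that the free gaps produce independent convergent geometric sums, and that the resulting Gamma products telescope cleanly. The interchange of the infinite sum with the expectation is harmless because every summand is nonnegative, so no measure-theoretic machinery beyond monotone summation is needed.
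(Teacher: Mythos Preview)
Your proposal is correct and follows essentially the same route as the paper. Your sum over ordered $t$-tuples of distinct labels is exactly the paper's sum over $z\in\N^n$ with $C_z=C$; your evaluation of $\E\big[\prod_s\bm\pi_{k_s}^{m_s}\big]$ via beta moments is the content of the paper's Lemma~A; your bijection to (permutation, gaps) together with the geometric-series summation is the paper's Lemma~B; and your identity $\sum_{(w_1,\ldots,w_t)}(S_1\cdots S_t)^{-1}=(m_1\cdots m_t)^{-1}$ is the paper's Lemma~C. The one substantive difference is that the paper proves this last identity by a sampling-without-replacement (urn) argument rather than your induction on $t$; both are short and elementary. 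A minor point: to conclude $\sum_k\bm\pi_k=1$ a.s.\ from $\E\sum_k\bm\pi_k=1$ you should also note the deterministic bound $\sum_{k\leq K}\bm\pi_k=1-\prod_{k\leq K}(1-\v_k)\leq 1$.
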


Our proof of the theorem relies on the following lemmas.
Let us abbreviate $z=(z_1,\ldots,z_n)$. % $\pi=(\pi_1,\pi_2,\ldots)$, and $v=(v_1,v_2,\ldots)$.
Given $z\in\N^n$, let $C_z$ denote the partition $[n]$ induced by $z$.

\begin{lemma}\label{lemma:A}
  For any $z\in\N^n$,
  $$\Pr(\z=z) = \frac{\Gamma(\alpha)}{\Gamma(n+\alpha)}\Big(\prod_{c\in C_z}\Gamma(|c|+1)\Big)\Big(\prod_{k=1}^m \frac{\alpha}{g_k+\alpha}\Big)$$
  where $m = \max\{z_1,\ldots,z_n\}$ and $g_k = \#\{i:z_i\geq k\}$.
\end{lemma}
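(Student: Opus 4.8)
The plan is to compute $\Pr(\z=z)$ directly by marginalizing over the stick-breaking variables $\v_1,\v_2,\ldots$. Since the $\z_i$ are conditionally i.i.d.\ given $\bm\pi$ with $\Pr(\z_i=k\mid\pi)=\pi_k$, I would start from
\[
\Pr(\z=z)=\E\Big[\prod_{i=1}^n\bm\pi_{z_i}\Big]=\E\Big[\prod_{k=1}^m\bm\pi_k^{\,n_k}\Big],
\]
where $n_k=\#\{i:z_i=k\}$ and $m=\max_i z_i$, so that $n_k=0$ for $k>m$. Everything then reduces to evaluating this single expectation.

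The first step is to substitute $\bm\pi_k=\v_k\prod_{i<k}(1-\v_i)$ and collect the powers of each $\v_k$. The factor $\v_k$ appears with exponent $n_k$, while $(1-\v_k)$ appears inside $\bm\pi_j$ for every $j>k$, hence with total exponent $\sum_{j>k}n_j=g_{k+1}$. This yields the clean factorization
\[
\prod_{k=1}^m\bm\pi_k^{\,n_k}=\prod_{k=1}^m\v_k^{\,n_k}(1-\v_k)^{g_{k+1}},
\]
together with the structural identity $g_k=n_k+g_{k+1}$, which will be used repeatedly. Because the $\v_k$ are independent and each is $\Beta(1,\alpha)$ with density $\alpha(1-v)^{\alpha-1}$, the expectation factors over $k$ into one-dimensional Beta integrals:
\[
\E\big[\v_k^{\,n_k}(1-\v_k)^{g_{k+1}}\big]=\alpha\int_0^1 v^{n_k}(1-v)^{g_{k+1}+\alpha-1}\,dv=\alpha\,\frac{\Gamma(n_k+1)\,\Gamma(g_{k+1}+\alpha)}{\Gamma(g_k+1+\alpha)},
\]
where the denominator is simplified using $n_k+g_{k+1}=g_k$.

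Finally I would assemble the product over $k=1,\ldots,m$ and telescope. Writing $\Gamma(g_k+1+\alpha)=(g_k+\alpha)\Gamma(g_k+\alpha)$ peels off the factor $\prod_{k=1}^m \alpha/(g_k+\alpha)$, and the remaining ratio $\prod_{k=1}^m \Gamma(g_{k+1}+\alpha)/\Gamma(g_k+\alpha)$ collapses to $\Gamma(g_{m+1}+\alpha)/\Gamma(g_1+\alpha)=\Gamma(\alpha)/\Gamma(n+\alpha)$, since $g_{m+1}=0$ and $g_1=n$. As $\Gamma(n_k+1)=1$ whenever $n_k=0$, the factor $\prod_{k=1}^m\Gamma(n_k+1)$ equals $\prod_{c\in C_z}\Gamma(|c|+1)$, with each part $c$ corresponding to a value $k$ having $n_k=|c|$. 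Collecting these three pieces gives exactly the claimed expression.

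I do not expect a deep obstacle: the argument is entirely a marginalization followed by exact Beta integrals. The two points demanding genuine care are the exponent bookkeeping in the first step, namely verifying that $(1-\v_k)$ accumulates precisely the exponent $g_{k+1}$ over all later indices, and the telescoping in the last step, where one must confirm $g_1=n$ and $g_{m+1}=0$ so that the boundary terms produce exactly the prefactor $\Gamma(\alpha)/\Gamma(n+\alpha)$. These are the steps I would write out most explicitly.
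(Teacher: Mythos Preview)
Your proposal is correct and is essentially identical to the paper's proof: both compute $\Pr(\z=z)$ by writing $\prod_k \pi_k^{n_k}$ in terms of the $\v_k$, factor the expectation into independent Beta integrals, and telescope using $g_k = n_k + g_{k+1}$, $g_1=n$, $g_{m+1}=0$. The only cosmetic difference is that the paper names the intermediate quantity $f_k=\#\{i:z_i>k\}$ before identifying it with $g_{k+1}$, whereas you use $g_{k+1}$ directly.
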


The proofs of the lemmas will be given in Section \ref{section:proofs}.
We use $\I(\cdot)$ to denote the indicator function, that is, $\I(E) = 1$ if $E$ is true, and $\I(E)=0$ otherwise.

\begin{lemma}\label{lemma:B}
    For any partition $C$ of $[n]$,
    $$\frac{\alpha^{|C|}}{\prod_{c\in C} |c|} = \sum_{z\in\N^n} \I(C_z=C) \prod_{k=1}^{m(z)} \frac{\alpha}{g_k(z)+\alpha}$$
    where $m(z) = \max\{z_1,\ldots,z_n\}$ and $g_k(z) = \#\{i:z_i\geq k\}$.
\end{lemma}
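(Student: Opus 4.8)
The plan is to evaluate the right-hand side directly by organizing the sum over $z$ according to the constraint $C_z = C$. The first observation is that $C_z = C$ holds precisely when $z$ is constant on each part $c \in C$ and assigns distinct values to distinct parts; equivalently, the vectors $z$ with $C_z = C$ are in bijection with the injections $w \colon C \to \N$, where $w(c)$ is the common value of $z_i$ for $i \in c$. So I would rewrite $\sum_{z} \I(C_z = C)(\cdots)$ as a sum over such injective labelings of the parts.

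Next I would make the summand explicit. Given a labeling $w$, sort its values as $a_1 < a_2 < \cdots < a_t$ (where $t = |C|$), let $s_j$ be the size of the part receiving label $a_j$, and set $G_j = s_j + s_{j+1} + \cdots + s_t$. Because $g_k(z) = \#\{i : z_i \geq k\}$, it is constant and equal to $G_j$ for every $k$ in the gap $(a_{j-1}, a_j]$ (with $a_0 = 0$), so $\prod_{k=1}^{m(z)} \alpha/(g_k(z) + \alpha) = \prod_{j=1}^t (\alpha/(G_j + \alpha))^{a_j - a_{j-1}}$. Viewing an injection as a choice of an ordering of the parts together with a strictly increasing label sequence, and substituting the gap variables $d_j = a_j - a_{j-1} \ge 1$, the sum over label sequences factors into independent geometric series. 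Since $0 < \alpha/(G_j + \alpha) < 1$, each series sums to $\sum_{d \ge 1} (\alpha/(G_j + \alpha))^d = \alpha/G_j$, which reduces the right-hand side to $\alpha^t \sum_{\text{orderings}} \prod_{j=1}^t (1/G_j)$.

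It then remains to establish the purely combinatorial identity $\sum \prod_{j=1}^t (1/G_j) = 1/\prod_{c \in C} |c|$, where the sum ranges over the $t!$ orderings of the parts and $G_j$ is the $j$-th suffix sum of the part sizes in the chosen order. I expect this to be the main obstacle, and I would prove it by induction on $t$: conditioning on which part is placed first, the leading suffix sum becomes the fixed total $S = \sum_{c} |c|$, and the remaining product $\prod_{j \ge 2} (1/G_j)$ is exactly the suffix-sum sum for the $(t-1)$-part subproblem obtained by deleting that part, so the inductive hypothesis applies. The computation then collapses via $\frac{1}{S}\sum_{c}\frac{1}{\prod_{c' \ne c}|c'|} = \frac{1}{S}\cdot\frac{S}{\prod_{c}|c|} = \frac{1}{\prod_c |c|}$, giving the claimed value. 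Everything except this last identity is routine once the piecewise-constant structure of $g_k$ is spotted: the reparametrization and the geometric sums are mechanical.
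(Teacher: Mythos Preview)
Your proposal is correct and follows essentially the same route as the paper: the same bijection between $\{z:C_z=C\}$ and pairs (ordering of parts, gap vector $d\in\N^t$), the same piecewise-constant analysis of $g_k$, and the same geometric-series collapse to $\alpha^t\sum_{\sigma}\prod_j 1/G_j$. The only difference is in the final combinatorial identity $\sum_{\sigma}\prod_j 1/G_j = 1/\prod_c|c|$: you prove it by induction on $t$ (conditioning on the first part, since $G_1$ is always the total), whereas the paper gives a one-line probabilistic proof by recognizing $\prod_c|c|\cdot\prod_j 1/G_j$ as the probability of drawing the permutation $\sigma$ under size-biased sampling without replacement, so the sum over $\sigma$ equals $1$.
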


\begin{proof}[\bf Proof of Theorem \ref{theorem:main}]
\begin{align*}
\Pr(\C = C) & =\sum_{z\in\N^n}\Pr(\C = C\mid \z = z)\Pr(\z = z)\\
& \overset{\text{(a)}}{=}\sum_{z\in\N^n}\I(C_z = C)\frac{\Gamma(\alpha)}{\Gamma(n +\alpha)}
  \Big(\prod_{c\in C_z}\Gamma(|c|+1)\Big)\Big(\prod_{k = 1}^{m(z)}\frac{\alpha}{g_k(z) +\alpha}\Big)\\
& =\frac{\Gamma(\alpha)}{\Gamma(n +\alpha)}\Big(\prod_{c\in C}\Gamma(|c|+1)\Big)
  \sum_{z\in\N^n}\I(C_z = C)\Big(\prod_{k = 1}^{m(z)}\frac{\alpha}{g_k(z) +\alpha}\Big)\\
& \overset{\text{(b)}}{=}\frac{\Gamma(\alpha)}{\Gamma(n +\alpha)}\Big(\prod_{c\in C}\Gamma(|c|+1)\Big)\frac{\alpha^{|C|}}{\prod_{c\in C} |c|}\\
& \overset{\text{(c)}}{=}\frac{\Gamma(\alpha)}{\Gamma(n +\alpha)}\Big(\prod_{c\in C}\Gamma(|c|)\Big)\alpha^{|C|}
\end{align*}
where (a) is by Lemma \ref{lemma:A}, (b) is by Lemma \ref{lemma:B}, and (c) is since $\Gamma(|c|+1) =|c|\Gamma(|c|)$.
\end{proof}

\section{Proofs of lemmas}
\label{section:proofs}

\begin{proof}[\bf Proof of Lemma \ref{lemma:A}]
Letting $e_k =\#\{i:z_i = k\}$, we have
$$\Pr(\z = z\mid\pi_1,\ldots,\pi_m) =\prod_{i = 1}^n\pi_{z_i} =\prod_{k = 1}^m \pi_k^{e_k} $$
and thus
$$\Pr(\z = z\mid v_1,\ldots,v_m) =\prod_{k = 1}^m\Big(v_k\textstyle\prod_{i = 1}^{k -1} (1 - v_i)\Big)^{e_k}
=\displaystyle\prod_{k = 1}^m v_k^{e_k} (1 - v_k)^{f_k} $$
where $f_k =\#\{i:z_i>k\}$. Therefore,
\begin{align*}
\Pr(\z = z) & =\int \Pr(\z = z\mid v_1,\ldots,v_m) p(v_1,\ldots,v_m) d v_1\cdots d v_m\\
& =\int \Big(\prod_{k = 1}^m v_k^{e_k} (1 - v_k)^{f_k}\Big) p(v_1)\cdots p(v_m) d v_1\cdots d v_m\\
& =\prod_{k = 1}^m \int v_k^{e_k} (1 - v_k)^{f_k} p(v_k) d v_k\\
& \overset{\text{(a)}}{=}\prod_{k = 1}^m \alpha B(e_k +1,\, f_k +\alpha)\\
& =\prod_{k = 1}^m \frac{\alpha\Gamma(e_k +1)\Gamma(f_k +\alpha)}{\Gamma(e_k + f_k +\alpha+1)}\\
& \overset{\text{(b)}}{=}\prod_{k = 1}^m \frac{\alpha\Gamma(e_k +1)\Gamma(g_{k+1} +\alpha)}{\Gamma(g_k +\alpha+1)}\\
& \overset{\text{(c)}}{=}\Big(\prod_{k = 1}^m \Gamma(e_k +1)\Big)\Big(\prod_{k = 1}^m\frac{\alpha}{g_k +\alpha}\Big)
\Big(\prod_{k = 1}^m\frac{\Gamma(g_{k +1} +\alpha)}{\Gamma(g_k +\alpha)}\Big)\\
& =\Big(\prod_{c\in C_z}\Gamma(|c|+1)\Big)\Big(\prod_{k = 1}^m\frac{\alpha}{g_k +\alpha}\Big)\frac{\Gamma(\alpha)}{\Gamma (n +\alpha)}
\end{align*}
where step (a) holds since
$$\int x^r (1 - x)^s \Beta(x|1,\alpha) dx = \frac{B(r +1,\,s+\alpha)}{B(1,\alpha)} = \alpha B(r+1,\,s+\alpha),$$
step (b) since $f_k = g_{k +1}$ and $g_k = e_k + f_k$, and step (c) since $\Gamma(x +1) = x\Gamma(x)$.
\end{proof}

Let $S_t$ denote the set of $t!$ permutations of $[t]$.

\begin{lemma}\label{lemma:C}
For any $n_1,\ldots,n_t\in\N$,
$$\sum_{\sigma\in S_t} \frac{1}{a_1(\sigma)\cdots a_t(\sigma)} = \frac{1}{n_1\cdots n_t} $$
where $a_i(\sigma) = n_{\sigma_i} + n_{\sigma_{i +1}} +\cdots + n_{\sigma_t}$.
\end{lemma}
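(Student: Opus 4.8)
The plan is to prove the identity by induction on $t$. The base case $t=1$ is immediate, since $S_1$ contains only the identity permutation and $a_1 = n_1$, so both sides equal $1/n_1$. For the inductive step, the first observation is that $a_1(\sigma) = n_1 + \cdots + n_t =: N$ for every $\sigma \in S_t$, because $a_1(\sigma)$ is the sum over all of $[t]$ and so does not depend on the ordering. Factoring this common term out of the sum gives
$$\sum_{\sigma \in S_t} \frac{1}{a_1(\sigma) \cdots a_t(\sigma)} = \frac{1}{N} \sum_{\sigma \in S_t} \frac{1}{a_2(\sigma) \cdots a_t(\sigma)}.$$

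Next I would partition $S_t$ according to the value of $\sigma_1$. Fixing $j \in [t]$ and considering those $\sigma$ with $\sigma_1 = j$, the tuple $(\sigma_2, \ldots, \sigma_t)$ ranges over all orderings of the remaining index set $[t]\setminus\{j\}$, and the tail sums $a_2(\sigma), \ldots, a_t(\sigma)$ are exactly the tail sums of the reduced problem on the $t-1$ values $\{n_i : i \neq j\}$. Applying the induction hypothesis to this reduced collection yields
$$\sum_{\sigma \in S_t:\, \sigma_1 = j} \frac{1}{a_2(\sigma) \cdots a_t(\sigma)} = \frac{1}{\prod_{i \neq j} n_i}.$$
Summing over $j$ and using the elementary identity $\sum_{j=1}^t \frac{1}{\prod_{i\neq j} n_i} = \frac{\sum_{j=1}^t n_j}{\prod_{i=1}^t n_i} = \frac{N}{\prod_{i=1}^t n_i}$, I obtain $\frac{1}{N}\cdot\frac{N}{\prod_i n_i} = \frac{1}{\prod_i n_i}$, which completes the induction. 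I expect the only real obstacle to be the bookkeeping in the inductive step: one must verify precisely that conditioning on $\sigma_1 = j$ converts $a_2(\sigma), \ldots, a_t(\sigma)$ into the tail sums of the $(t-1)$-element instance, so that the induction hypothesis applies verbatim. Everything else is routine algebra.

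As a sanity check and an alternative route, there is a clean probabilistic reading: if $E_1, \ldots, E_t$ are independent with $E_i \sim \Exp(n_i)$, then by memorylessness of the exponential, $\Pr(E_{\sigma_1} < \cdots < E_{\sigma_t}) = \prod_{i=1}^t n_{\sigma_i}/a_i(\sigma) = (\prod_i n_i)/(a_1(\sigma)\cdots a_t(\sigma))$; summing over all orderings gives $1$, which rearranges to the claim. This argument is shorter but invokes continuous random variables, so I would keep the inductive proof as the primary one in order to remain within the elementary, measure-theory-free spirit of the note.
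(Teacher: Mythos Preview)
Your induction proof is correct. The paper takes a different, probabilistic route: it defines a distribution on $S_t$ by sequential sampling without replacement from an urn of $t$ balls with sizes $n_1,\ldots,n_t$, each draw being proportional to size. The chain rule gives $p(\sigma) = \prod_{i=1}^t n_{\sigma_i}/a_i(\sigma) = (n_1\cdots n_t)/(a_1(\sigma)\cdots a_t(\sigma))$, and the identity follows immediately from $\sum_\sigma p(\sigma)=1$. This is essentially your exponential-order-statistics argument in disguise (size-biased sampling without replacement is exactly the ordering of independent $\Exp(n_i)$ variables), but because the urn formulation lives on a finite sample space it avoids continuous random variables altogether, so your worry about measure theory does not actually arise. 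Your inductive argument is a bit longer but completely self-contained and algebraic; the paper's one-line probabilistic trick is shorter but requires recognizing that the expression is a probability mass function. Either is perfectly acceptable for the elementary spirit of the note.
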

\begin{proof}
Consider an urn containing $t$ balls of various sizes---specifically, suppose the balls are labeled $1,\ldots,t$ and have sizes $n_1,\ldots,n_t$. Consider the process of sampling without replacement $t$ times from the urn, supposing that the probability of drawing any given ball is proportional to its size. This defines a distribution on permutations $\sigma\in S_t$ such that, letting $n =\sum_{i = 1}^t n_i$,
\begin{align*}
& p(\sigma_1) =\frac{n_{\sigma_1}}{n}=\frac{n_{\sigma_1}}{a_1(\sigma)},\\
& p(\sigma_2|\sigma_1)  =\frac{n_{\sigma_2}}{n - n_{\sigma_1}}=\frac{n_{\sigma_2}}{a_2(\sigma)},\\
& p(\sigma_3|\sigma_1,\sigma_2) =\frac{n_{\sigma_3}}{n - n_{\sigma_1} - n_{\sigma_2}}=\frac{n_{\sigma_3}}{a_3(\sigma)},
\end{align*}
and so on. Therefore, since $n_{\sigma_1}\cdots n_{\sigma_t} = n_1\cdots n_t$,
\begin{align}\label{equation:sigma}
p(\sigma) = p(\sigma_1) p(\sigma_2|\sigma_1)\cdots p(\sigma_t|\sigma_1,\ldots,\sigma_{t -1})
=\frac{n_1\cdots n_t}{a_1(\sigma)\cdots a_t(\sigma)}.
\end{align}
Since $p(\sigma)$ is a distribution on $S_t$ by construction, we have $\sum_{\sigma\in S_t} p(\sigma) = 1$; applying this to Equation \ref{equation:sigma} and dividing both sides by $n_1\cdots n_t$ gives the result.
\end{proof}

\begin{proof}[\bf Proof of Lemma \ref{lemma:B}]
Let $t =|C|$, and suppose $c_1,\ldots,c_t$ are the parts of $C$. For $\sigma\in S_t$, define $a_i(\sigma)=|c_{\sigma_i}|+\cdots +|c_{\sigma_t}|$. For any $z\in\N^n$ such that $C_z = C$, if $k_1<\cdots<k_t$ are the distinct values taken on by $z_1,\ldots,z_n$, then
\begin{align*}
\prod_{k = 1}^{m(z)}\frac{\alpha}{g_k(z) +\alpha} & =\Big(\frac{\alpha}{g_{k_1}(z) +\alpha}\Big)^{k_1}
\Big(\frac{\alpha}{g_{k_2}(z) +\alpha}\Big)^{k_2 - k_1} \cdots\Big(\frac{\alpha}{g_{k_t}(z) +\alpha}\Big)^{k_t - k_{t -1}} \\
& =\Big(\frac{\alpha}{a_1(\sigma) +\alpha}\Big)^{d_1}
\Big(\frac{\alpha}{a_2(\sigma) +\alpha}\Big)^{d_2} \cdots\Big(\frac{\alpha}{a_t(\sigma) +\alpha}\Big)^{d_t}
\end{align*}
where $d_i = k_i - k_{i -1}$, with $k_0 = 0$, and $\sigma$ is the permutation of $[t]$ such that $c_{\sigma_i} =\{j:z_j = k_i\}$. Note that the definition of $d=(d_1,\ldots,d_t)$ and $\sigma$ sets up a one-to-one correspondence (that is, a bijection) between $\{z\in\N^n:C_z = C\}$ and $\{(\sigma,d): \sigma\in S_t, \, d\in\N^t\}$. Therefore,
\begin{align*}
\sum_{z\in\N^n}\I(C_z = C)\prod_{k = 1}^{m(z)}\frac{\alpha}{g_k(z) +\alpha}
& =\sum_{\sigma\in S_t}\sum_{d\in\N^t}\prod_{i = 1}^t\Big(\frac{\alpha}{a_i(\sigma) +\alpha}\Big)^{d_i}\\
& =\sum_{\sigma\in S_t}\prod_{i = 1}^t\sum_{d_i\in\N}\Big(\frac{\alpha}{a_i(\sigma) +\alpha}\Big)^{d_i}\\
& \overset{\text{(a)}}{=}\sum_{\sigma\in S_t}\prod_{i = 1}^t \frac{\alpha}{a_i(\sigma)}\\
& \overset{\text{(b)}}{=}\frac{\alpha^t}{\prod_{i = 1}^t|c_i|} =\frac{\alpha^t}{\prod_{c\in C}|c|}
\end{align*}
where step (a) follows from the geometric series, $\sum_{k = 1}^\infty x^k = 1/(1-x) - 1$ for $x\in[0,1)$, and step (b) is by Lemma \ref{lemma:C}.
\end{proof}

\section{Deriving the stick-breaking process from the CRP}
\label{section:stick-from-CRP}

We have provided an elementary derivation of the CRP from the stick-breaking process.
What about going the other direction?  Starting from the CRP, how might one arrive at the stick-breaking representation?
% What about the other direction?  
% It is not obvious how one would realize that the stick-breaking process gives rise to the CRP in this way.
Here, we sketch out how the stick-breaking process arises naturally from the CRP.
This section should be viewed as a concise exposition of existing results; see \cite{Pitman_2006} for reference.
In this section only, we appeal to Kingman's paintbox representation of exchangeable partitions, but otherwise our treatment is self-contained.

The CRP is a sequential allocation of customers $i=1,2,\ldots$ to tables $k=1,2,\ldots$ in which 
customer $1$ sits at table $1$, and each successive customer sits at a currently occupied table with probability proportional to the number of
customers at that table, or sits at the next unoccupied table with probability proportional to $\alpha$.
The resulting random partition of customers by table is distributed according to Equation~\ref{equation:CRP}.

First, consider table $1$.
Let $\y_i=1$ if customer $i$ sits at table $1$, and $\y_i=0$ otherwise.  
Then $\y_1=1$, and $\y_2,\y_3,\ldots$ is a two-color P\'{o}lya urn process in which $\y_i\mid \y_1,\ldots,\y_{i-1} \sim \mathrm{Bernoulli}\big(\sum_{j=1}^{i-1} \y_j / (\alpha + i-1)\big)$. Thus, 
\begin{align*}
p(y_1,\ldots,y_n) &= 
\frac{1 \cdot 2 \cdots (s_n - 1)\, \alpha (\alpha+1)\cdots (\alpha+n - s_n - 1)}{(\alpha+1)(\alpha+2)\cdots(\alpha+n-1)} \\
&= \frac{B(s_n,\, \alpha+n-s_n)}{B(1,\alpha)} = \int v^{s_n-1} (1-v)^{n-s_n} \mathrm{Beta}(v \mid 1,\alpha) d v,
\end{align*}
where $s_n = \sum_{i=1}^n y_i$.
Therefore, the same distribution on $\y_1,\y_2,\ldots$ can be generated by drawing $\v\sim\mathrm{Beta}(1,\alpha)$, then
setting $\y_1=1$ and drawing $\y_2,\y_3,\ldots|\v\iid \mathrm{Bernoulli}(\v)$. 
% Note to self: This is because by Kolmogorov's extension theorem, there is a unique probability measure on (y_1,y_2,...) with these f.d.d.s.
Letting $\v_1 = \lim_{n\to\infty} \frac{1}{n}\sum_{i=1}^n \y_i$ (the limit exists with probability 1), it follows that $\v_1 \sim \mathrm{Beta}(1,\alpha)$.
Note that $\v_1$ is the asymptotic proportion of customers at table $1$.

Now, consider table $k$.
Given the indices of the subsequence of customers that do not sit at tables $1,\ldots,k-1$,
the customers in this subsequence sit at table $k$ according to the same urn process as $\y_1,\y_2,\ldots$ above,
independently of the corresponding urn processes for $1,\ldots,k-1$.
% Note to self: One way to see that they are independent is to note that we could independently generate an urn process for each table,
% and then stitch them together to construct the overall process.
Thus, of the customers not at tables $1,\ldots,k-1$, the proportion at table $k$ converges to a $\mathrm{Beta}(1,\alpha)$ random variable, say $\v_k$, independent of $\v_1,\ldots,\v_{k-1}$.
Therefore, out of all customers, the proportion at table $k$ converges to $\bm\pi_k := \v_k\prod_{j=1}^{k-1} (1 - \v_j)$ with probability 1.
(The preceding urn-based derivation is adapted from \citealp{broderick2013cluster}.)

Although we have arrived at the stick-breaking process, our derivation is not yet complete because the 
partition distribution given $\bm\pi$ as defined above is different than the partition distribution induced by assignments $\z_1,\ldots,\z_n|\bm\pi\iid \bm\pi$.
% assignment of customers to tables is not the same as when the assignments $\z_1,\z_2,\ldots$ are drawn i.i.d.\ from $\bm\pi$.
To establish that they are equivalent, marginally, we use Kingman's paintbox representation for exchangeable partitions.

By \cite{kingman1978representation}, 
there exists a random sequence $\q = (\q_1,\q_2,\ldots)$ with $\q_1\geq \q_2\geq \cdots \geq 0$ and $\sum_{j=1}^\infty \q_j = 1$ (with probability 1) such that 
the random partition induced by $\z_1,\ldots,\z_n|\q\iid \q$ is marginally distributed according to Equation~\ref{equation:CRP};
for a concise proof, see \cite{aldous1985exchangeability}, Prop.\ 11.9.
(Note that in general, $\sum_{j=1}^\infty \q_j < 1$ is possible, but not in this case because in the CRP, singleton clusters have probability 0, asymptotically.)
% There is no ``dust'' in this case since, asymptotically, singleton clusters have probability 0 in the CRP.)
Let $\z_1,\z_2,\ldots|\q\iid \q$ and $\hat{\q}_{n j} = \frac{1}{n} \sum_{i=1}^n \I(\z_i = j)$.
With probability 1, for all $j\in\N$, $\hat{\q}_{n j} \to \q_j$ as $n\to\infty$,
by the law of large numbers.
Let $\bm\sigma = (\bm\sigma_1,\bm\sigma_2,\ldots)$ be the permutation of $\N$ such that $\bm\sigma_k$ is the $k$th distinct value to appear in $\z_1,\z_2,\ldots$; for reference, $\bm\sigma$ is called a size-biased permutation.
In the CRP terminology, customer $i$ is at table $k$ when $\z_i = \bm\sigma_k$, so the asymptotic proportion of customers at table $k$ is $\q_{\bm\sigma_k}$.
Therefore, by the urn derivation above, 
$(\q_{\bm\sigma_1},\q_{\bm\sigma_2},\ldots)$ is equal in distribution to $(\bm\pi_1,\bm\pi_2,\ldots)$ where $\bm\pi_k = \v_k\prod_{j=1}^{k-1} (1 - \v_j)$ and $\v_1,\v_2,\ldots\iid\mathrm{Beta}(1,\alpha)$.
% todo: Check previous line --- infinite vector?
Finally, note that permuting the entries of $\q$ (even via a random permutation that depends on $\q$) does not affect the marginal distribution
of the partition induced by $\z_1,\ldots,\z_n|\q\iid \q$.
% Note to self: This fact is noted by \cite{Pitman_2006} at Equation 2.14.
This shows that the partition induced by $\z_1,\ldots,\z_n|\bm\pi\iid \bm\pi$ is marginally distributed according to Equation~\ref{equation:CRP}.

\section*{Acknowledgments}

Thanks to David Dunson, Garritt Page, Tamara Broderick, and Steve MacEachern for helpful conversations.

\bibliographystyle{abbrvnatcap}
\bibliography{refs}

\end{document}